\newtheorem{thm}{Theorem}[section]
\newtheorem{lem}[thm]{Lemma}
\theoremstyle{definition}
\newtheorem{remk}[thm]{Remark}
\newcommand{\kc}{\mathcal{C}}
\newcommand{\ka}{\mathcal{A}}
\newcommand{\specificthanks}[1]{\@fnsymbol{#1}}
\title{A sharp inequality for Kendall's $\tau$ and \mbox{Spearman's $\rho$} of Extreme-Value Copulas}
\author{Thomas Mroz\thanks{Department for Mathematics, University of Salzburg, Salzburg, Austria, E-mails: noppadon.kam@gmail.com, wolfgang@trutschnig.net}\label{uni-salzburg} 
 \and
Wolfgang Trutschnig\textsuperscript{\specificthanks{1}}}
\begin{document}

\maketitle
\begin{abstract}
We derive a new (lower) inequality between Kendall's $\tau$ and Spearman's $\rho$ for two-dimensional Extreme-Value Copulas, show that this inequa\-lity is sharp in each point and conclude that the comonotonic and the product copula are the only Extreme-Value Copulas for which the well-known lower Hutchinson-Lai inequality is sharp.   
        
\end{abstract}

\section{Introduction}\label{intro}


It is well known that, on the one hand, Kendall's $\tau$ and Spearman's $\rho$ are both measures of concordance, and that, on the other hand, they quantify different aspects of the underlying dependence structure \cite{FrN}. Although a full characterization of the exact region $\Omega$ determined by all possible values of Kendall's $\tau$ and Spearman's $\rho$ was only recently provided in \cite[]{SPT}, it has been well-known since the 1950s that for (continuous) random variables $X,Y$ the value of $\vert \tau(X,Y) - \rho(X,Y)\vert$ can at most be $\frac{1}{2}$ \cite[]{Dan,DS}. For standard subfamilies of copulas like Archimedean copulas and Extreme-Value copulas the values of Kendall's $\tau$ and Spearman's $\rho$ may differ significantly less, determining the exact $\tau$-$\rho$-region might, however, be even more difficult than determining $\Omega$ has been since in subfamilies handy dense subsets (like shuffles of the minimum copula $M$ in case of $\Omega$) may be hard to find or not even exist.     

In 1990 Hutchinson and Lai conjectured that for continuous random variables $X,Y$ such that $Y$ is stochastically increasing in $X$ and vice versa (see \cite{HuLai, Nel06} for a definition and equivalent formulations) the following inequalities hold:
\begin{align}\label{ineq:HuLai}
-1 + \sqrt{1+3 \tau(X,Y)} \leq \rho(X,Y) \leq \min\Big\{\frac{3 \tau(X,Y)}{2}, 2 \tau(X,Y)-\tau^2(X,Y)\Big\}
\end{align}
In the sequel we will refer to the first inequality in (\ref{ineq:HuLai}) as lower Hutchinson-Lai inequality and to the second one as upper Hutchinson-Lai inequality. A counterexample to the upper Hutchinson-Lai inequality (concerning the $\frac{3 \tau(X,Y)}{2}$ part) can be found in \cite{Nel06}, and the lower Hutchinson-Lai inequality was disproved in \cite{MRG}. 
On the other hand, \cite{Hu} provided a variational calculus based proof of the Hutchinson-Lai inequalities for an important family of stochastically increasing (continuous) random variables $(X,Y)$ - those whose underlying copula $C_A$ is an Extreme-Value copula.
Extreme-value copulas (EVCs, for short) form an important subclass of copulas that naturally arise in various fields of application like hydrology \cite[]{Sal} 
and finance \cite[]{LS,MNE}, whenever maxima of i.i.d. sequences of random variables are considered. For more information on EVCs we refer to  \cite[]{dHR,Pick,DuSbook} and the references therein.

In the current paper we focus on EVCs and the lower Hutchinson-Lai inequality, show that it is only sharp for continuous random variables $X,Y$ that are either comonotonic or independent, prove the validity of Conjecture (C2) in \cite[]{SPT}, i.e.
\begin{align}\label{ineq:main}
\frac{3\tau(X,Y)}{2+\tau(X,Y)} \leq \rho(X,Y),
\end{align}
and then show that this inequality is sharp. As scale-invariant quantities, Kendall's $\tau$ and Spearman's $\rho$ only depend on the underlying copula, we will therefore directly work with EVCs $C_A$ and their corresponding Pickands dependence function $A$. 
Our original idea was to modify the ideas by \cite{Hu} and to tackle ineq. (\ref{ineq:main}) by tools from variational calculus. Considering that we were, however, not able to comprehend why the sets $K_\alpha$ in the proof of Theorem 4.1. in \cite[]{Hu} should be sequentially compact, we opted for another, new method of proof based on the sensitivity of $\tau$ and $\rho$ with respect to certain modifications of piecewise linear Pickands dependence functions.     
  
The rest of this paper is organized as follows: In Section 2 we introduce some notation and recall basic facts about two-dimensional extreme-value co\-pulas that will be used in the sequel. Section 3 derives ineq. (\ref{ineq:main}) with the help of two lemmata that are (in our opinion) interesting in themselves. An outlook to future work and the appendix containing a technical lemma needed in the proofs completes the paper.   

\section{Notation and preliminaries}\label{notsec}
In the sequel $\kc$ will denote the family of all two-dimen\-sional \emph{copulas}, 
$\mathcal{P}_\mathcal{C}$ the family of all \emph{doubly stochastic measures}, i.e. the family of all probability measures on $[0,1]^2$ whose marginals
are uniformly distributed on $[0,1]$; for background on copulas we refer to
\cite{DuSbook}, \cite{Nel06}, and \cite{Sem-10}. For every $C \in \kc$ the 
corresponding doubly stochastic measure will be denoted by $\mu_C$. 
Letting $d_\infty$ denote the uniform metric on $\kc$ it is well known that $(\kc,d_\infty)$
is a compact metric space. $C \in \kc$ is called \emph{extreme-value copula} (EVC) 
if there exists a copula $B \in \kc$ such that 
$$
C(x,y)=\lim_{n \rightarrow \infty} B^n\big(x^{\frac{1}{n}}, y^{\frac{1}{n}}\big)
$$
for all $x,y \in [0,1]$. It is well known that the following three conditions are equivalent \cite[]{dHR,Pick,DuSbook,Nel06}:
\begin{enumerate}
\item $C$ is an EVC.\\[-3mm]
\item $C$ is \emph{max-stable}, i.e. $C(x,y)=C^n\big(x^{\frac{1}{n}}, y^{\frac{1}{n}}\big)$ for all $n \in \mathbb{N}$ and all $x,y \in [0,1]$. \\[-3mm]
\item There exists a \emph{Pickands dependence function $A$}, i.e. a convex function $A:[0,1] \rightarrow [0,1]$ fulfilling $\max\{1-x,x\} \leq A(x)\leq 1$ for
all $x \in [0,1]$, such that
\begin{equation}\label{EVC_explicit}
C(x,y)= (xy)^{A\big(\tfrac{\ln{(x)}}{\ln{(xy)}}\big)}
\end{equation}
holds for all $x,y \in (0,1)^2$.
\end{enumerate}
 
In what follows we will only work with the convex set $\mathcal{A}$ of all Pickands dependence functions and let $\kc_{EV}$ denote the family of all extreme-value copulas. 
For every $A \in \ka$ we will write $C_A$ for the copula induced by $A$ according to (the right-hand side of) eq. (\ref{EVC_explicit}). 
$D^+A$ ($D^-A$) will denote the right-hand (left-hand) derivative of $A \in \mathcal{A}$. 
Setting $D^+A(0):=D^-A(0)$ it is straightforward to see that $D^+A:[0,1] \rightarrow [-1,1]$ is non-decreasing and right-continuous on $[0,1]$. 
The Pickands function corresponding to the minimum copula $M$ will be denoted by $A_M$, i.e. $A_M(x)=\max\{x,1-x\}$. Furthermore $\Vert \cdot \Vert_\infty$ will denote the uniform norm on $\mathcal{A}$. 
For further properties of Pickands functions and their right-hand derivative we refer to \cite{TSFS} and the references therein. 

It is well known that for every $A \in \mathcal{A}$ Spearman's $\rho$ and Kendall's $\tau$ of the corresponding Extreme-Value Copula $C_A$ can be calculated as 
\begin{align*}
	&\tau(C_A) = \int_0^1 \frac{t(1-t)}{A(t)} d(D^+A)(t),\\
	&\rho(C_A) = 12 \int_0^1 \frac{1}{(1+A(t))^2} dt - 3.
\end{align*}
In the sequel we will simply write 
$\tau(A)$ and $\rho(A)$ instead of $\tau(C_A)$ and $\rho(C_A)$.

\section{A sharp inequality between $\tau$ and $\rho$}
We are now going to prove the inequality $\rho(A)\geq \frac{3\tau(A)}{2 + \tau(A)}$ for all Pickands dependence functions $A \in \mathcal{A}$ and show that this inequality is sharp. Doing so we will first derive the result for the class of all piecewise linear Pickands dependence functions and then extend it to the full class $\mathcal{A}$ via a standard denseness and continuity argument. 

For every $\mathbf{x}=(x_1,x_2,\ldots,x_n)$ with $x_0:=0 < x_1 < x_2 < \cdots <x_n <1=:x_{n+1}$ we will let
$\mathcal{A}^{\mathbf{x}}$ denote the set of all $A \in \mathcal{A}$ 
which are linear on each of the intervals $[x_{i-1},x_{i}], i \in \{1,\ldots, n+1\}$. 
For $A \in \mathcal{A}^{\mathbf{x}}$, setting $y_i = A(x_i)$, the formulas for Kendall's $\tau$ and Spearman's $\rho$ obviously become
\begin{align}
	&\tau(A) = \sum_{i=1}^n \frac{x_i(1-x_i)}{y_i} \left(\frac{y_{i+1}-y_i}{x_{i+1}-x_i} - \frac{y_i-y_{i-1}}{x_i-x_{i+1}}\right),\label{tau.piecewise} \\
	&\rho(A) = 12 \sum_{i=1}^{n+1} \frac{x_i-x_{i-1}}{(1+y_{i-1})(1+y_i)} - 3. \label{rho.piecewise}
\end{align}

\noindent For $y_1 \in [A_M(x_1),1]$, $\mathcal{A}^{\mathbf{x}}_{y_1}$ will denote the subclass 
of all $A \in \mathcal{A}^{\mathbf{x}}$ fulfilling $A(x_1)=y_1$. $T_{x_1,y_1} \in \mathcal{A}^{\mathbf{x}}_{y_1}$ denotes the triangular Pickands function induced by $(x_1,y_1)$, i.e. the only Pickands function which is linear on $[0,x_1]$ and on $[x_1,1]$, and fulfills $T_{x_1,y_1}(x_1)=y_1$. 
For $A \in \mathcal{A}^{\mathbf{x}}_{y_1}$ and $x \in (0,x_1)$ we define the set $I_x^A$ by  
$$
I_x^A= \Big[\max\Big\{1-x,\, A(x_1)-(x_1-x)\frac{A(x_2)-A(x_1)}{x_2-x_1}\Big\},1-\frac{1-A(x_1)}{x_1}x \Big].
$$  
Obviously $I_x^A$ coincides with the set of all $y \in [0,1]$ such that there exists a (necessarily unique) Pickands function $B$ which coincides with $A$ on the interval $[x_1,1]$, which is linear on $[0,x]$ and on $[x,x_1]$ and fulfills $B(x)=y$. In the sequel we will denote this function by $\varphi_{x,y}^\mathbf{x}(A)$ and refer to $I_x^A$ as set of admissible $y$-values given $A$ and $x$. Notice that for $y=1-\frac{1-A(x_1)}{x_1}x$ we have $\varphi_{x,y}^\mathbf{x}(A)=A$ and that for each $A \in \mathcal{A}^{\mathbf{x}}_{y_1}$ the admissible set $I_x^A$ fulfills $I_x^A \subseteq I_x^{T_{x_1,y_1}}$. 
\begin{figure}[!h]
  \begin{center}
  \includegraphics[width = 8cm]{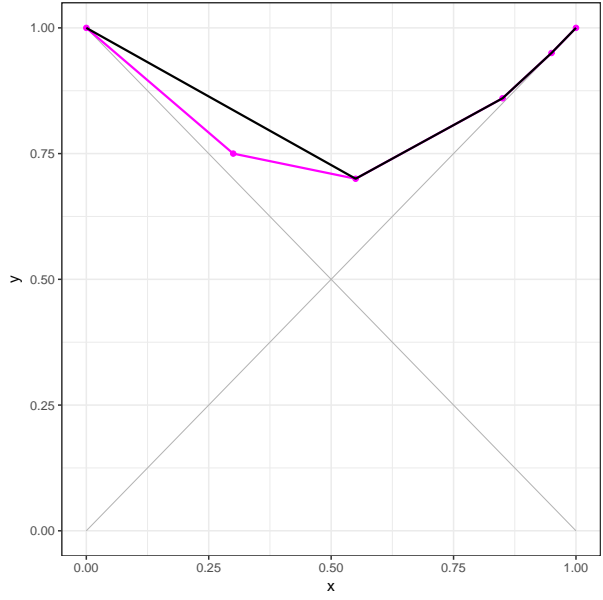}
        \caption{Example of a Pickands dependence function $A \in \mathcal{A}^{\mathbf{x}}$ with
        $\mathbf{x}=(0.05,0.55,0.85,0.95$ (black) and the Pickands function 
        $\varphi_{x,y}^\mathbf{x}(A)$ with $x=0.3,y=0.75$ (magenta).}\label{fig:phi}
  \end{center}
     \end{figure}

\noindent Define $\Delta: \mathcal{A}^2 \rightarrow \mathbb{R}$ by
\begin{equation}
	\Delta(A,B)=\rho(A)-\rho(B) - \left(\frac{3\tau(A)}{2+\tau(A)} - \frac{3\tau(B)}{2+\tau(B)} \right).
\end{equation}
The subsequent two lemmata study properties of $\Delta$ which turn out to be key for deriving a sharp lower inequality for $\tau$ and $\rho$.
\begin{lem}\label{lem:inequ}
Suppose that $A \in \mathcal{A}^{\mathbf{x}}_{y_1}$, that $x \in (0,x_1)$ and that $y \in I_x^A$. Then the following inequality holds:
\begin{equation}\label{ineq:trafo}
	\Delta(\varphi_{x,y}^\mathbf{x}(A),A) \geq \Delta(\varphi_{x,y}^{x_1}(T_{x_1,y_1}),T_{x_1,y_1}).
\end{equation}
Moreover, in case of $y_1 <1$ and $y < 1-\frac{1-A(x_1)}{x_1}x$ we have equality in (\ref{ineq:trafo}) if and only if $A=T_{x_1,y_1}$.
\end{lem}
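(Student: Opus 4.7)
My plan is to exploit a locality property that reduces (\ref{ineq:trafo}) to a one-dimensional monotonicity statement about $g(t):=\frac{3t}{2+t}$. Since $\varphi:=\varphi_{x,y}^{\mathbf{x}}(A)$ agrees with $A$ on $[x_1,1]$, the piecewise-linear formulas (\ref{tau.piecewise}) and (\ref{rho.piecewise}) show that both differences $\rho(\varphi)-\rho(A)$ and $\tau(\varphi)-\tau(A)$ depend only on $(x,y,x_1,y_1)$. For $\rho$ this is immediate: in (\ref{rho.piecewise}) the summand $\frac{x_1}{2(1+y_1)}$ is replaced by $\frac{x}{2(1+y)}+\frac{x_1-x}{(1+y)(1+y_1)}$. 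For $\tau$ one verifies that when the summand at $x_1$ in (\ref{tau.piecewise}) is split into summands at $x$ and at $x_1$ for $\varphi$, the contribution involving the slope $(y_2-y_1)/(x_2-x_1)$ cancels exactly. Writing $T:=T_{x_1,y_1}$ and $\varphi_T:=\varphi_{x,y}^{x_1}(T)$, this locality yields
$$\rho(\varphi)-\rho(A)=\rho(\varphi_T)-\rho(T), \qquad \tau(\varphi)-\tau(A)=\tau(\varphi_T)-\tau(T)=:\delta.$$

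Substituting these identities into the definition of $\Delta$, the $\rho$-contributions cancel on both sides of (\ref{ineq:trafo}) and the desired inequality rewrites as
$$\Delta(\varphi,A)-\Delta(\varphi_T,T)=\int_0^\delta\bigl[g'(\tau(T)+u)-g'(\tau(A)+u)\bigr]\,du\geq 0.$$
Since $g'(t)=6/(2+t)^2$ is strictly positive and strictly decreasing on $(-2,\infty)$, this holds as soon as $\delta\geq0$ and $\tau(T)\leq\tau(A)$. Both sign conditions come from elementary pointwise comparisons: by admissibility of $(x,y)$, the function $\varphi$ is a convex bending of the linear function $A$ on $[0,x_1]$ with the same values at $0$ and $x_1$, hence $\varphi\leq A$ pointwise; by convexity of $A$ on $[x_1,1]$ with $A(x_1)=T(x_1)$ and $A(1)=T(1)=1$, the chord $T$ dominates $A$ there, hence $A\leq T$ pointwise. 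Monotonicity of EVCs in $A$ together with monotonicity of Kendall's $\tau$ in the concordance order then yields $\tau(\varphi)\geq\tau(A)\geq\tau(T)$.

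For the equality statement assume $y_1<1$ and $y<1-\frac{1-A(x_1)}{x_1}x$, so that $\delta>0$. Strict monotonicity of $g'$ reduces equality in (\ref{ineq:trafo}) to $\tau(T)=\tau(A)$, and it thus suffices to show that within $\mathcal{A}^{\mathbf{x}}_{y_1}$ the equality $\tau(A)=\tau(T)$ forces $A=T$. This last step is the main obstacle of the proof. I plan to argue as follows: $A\neq T$ combined with $A\leq T$ and the convexity/boundary constraints forces $A<T$ strictly on an open subinterval of $(x_1,1)$, whence $C_A>C_T$ on a set of positive Lebesgue measure; a strict version of the concordance monotonicity of Kendall's $\tau$, accessible either via a direct manipulation of the piecewise-linear formula (\ref{tau.piecewise}) or via the representation $\tau(C)=4\iint C\,dC-1$ together with the regularity of the EVCs involved, then yields the contradiction $\tau(A)>\tau(T)$. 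Granted this strict monotonicity, the equality statement follows.
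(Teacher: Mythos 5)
Your proof of the inequality itself is correct and follows essentially the paper's route: you isolate the locality of $\delta_\tau=\tau(\varphi)-\tau(A)$ and $\delta_\rho=\rho(\varphi)-\rho(A)$ (the paper makes this explicit by computing both as rational functions of $x,y,x_1,y_1$ in (\ref{eq:delta_tau})--(\ref{eq:delta_rho})) and then reduce (\ref{ineq:trafo}) to the monotonicity of $t\mapsto g(t+\delta)-g(t)$ for $\delta\geq 0$, which is exactly the paper's observation that the right-hand side of (\ref{eq:formula.Delta}) increases with $\tau(A)$; your integral representation via $g'$ is a clean equivalent of that step, and the orderings $\varphi\leq A\leq T_{x_1,y_1}$ with the resulting $\tau(\varphi)\geq\tau(A)\geq\tau(T_{x_1,y_1})$ are argued as in the paper.

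The gap is in the equality characterization. Both the claim $\delta>0$ and the final implication that $A\leq T_{x_1,y_1}$, $A\neq T_{x_1,y_1}$ and $\tau(A)=\tau(T_{x_1,y_1})$ cannot coexist rest on a \emph{strict} version of the monotonicity of $\tau$ under the pointwise order of Pickands functions, which you correctly flag as the main obstacle but only sketch. The sketch as written does not close: from $\tau(C)=4\int C\,d\mu_C-1$ one obtains (cf.\ (\ref{eq:tau-strict}))
\[
\tau(C_A)-\tau(C_T)=4\Big(\int_{[0,1]^2}(C_A-C_T)\,d\mu_{C_A}+\int_{[0,1]^2}(C_A-C_T)\,d\mu_{C_T}\Big),
\]
and the fact that the open set $\{C_A>C_T\}$ has positive Lebesgue measure does not imply that it has positive $\mu_{C_A}$- or $\mu_{C_T}$-measure: the doubly stochastic measure of an extreme-value copula is in general not equivalent to Lebesgue measure, and its support can be a proper subset of $[0,1]^2$. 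This is exactly where the paper's Appendix Lemma \ref{preserveordertau} (quoted from \cite{KGJT}) invokes the support characterization $\mathrm{Supp}(\mu_{C_D})=\{(x,y):f_{L_D}(x)\leq y\leq f_{R_D}(x)\}$ from \cite{TSFS} to verify that the open set $\{C_A>C_T\}$ actually meets both supports. Your alternative route via a direct manipulation of (\ref{tau.piecewise}) for piecewise linear functions might also work, but it is not carried out. Until one of these arguments is supplied, the ``only if'' direction of the equality statement (and the strict positivity of $\delta$ that you use to invoke strict monotonicity of $g'$) remains unproven.
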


\begin{proof}
Fix $A \in \mathcal{A}^{\mathbf{x}}_{y_1}$, $x \in (0,x_1)$, and $y \in I_x^A$.
Using equation (\ref{tau.piecewise}) and (\ref{rho.piecewise}) a straightforward calculation yields
\begin{align}
	\delta_\tau &:= \tau(\varphi_{x,y}^\mathbf{x}(A))-\tau(A) = \frac{(x_1-x + xy_1 - x_1y) (y_1-y - xy_1 + x_1y)}{(x_1-x)yy_1}, \label{eq:delta_tau}\\
	\delta_\rho &:= \rho(\varphi_{x,y}^\mathbf{x}(A))-\rho(A) = \frac{6 (x_1(1-y) - x(1-y_1))}{(1+y)(1+y_1)},\label{eq:delta_rho}
\end{align}
so both $\delta_\tau$ and $\delta_\rho$ only depend on $A$ in terms of $x_1,y_1$ and $y \in I_x^A$.
Having this we directly get
\begin{align}\label{eq:formula.Delta}
	\Delta(\varphi_{x,y}^\mathbf{x}(A),A) &= \delta_\rho - \left(\frac{3\tau(\varphi_{x,y}^\mathbf{x}(A))}{2+\tau(\varphi_{x,y}^\mathbf{x}(A))} - \frac{3\tau(A)}{2+\tau(A)} \right) \nonumber \\
	&= \delta_\rho  - \frac{6\delta_\tau}{(2+\tau(A))(2+\tau(A)+\delta_\tau)},
\end{align}
implying that $\Delta(\varphi_{x,y}^\mathbf{x}(A),A)$ decreases if $\tau(A)$ decreases. Considering $y \in I_x^A \subseteq I_x^{T_{x_1,y_1}}$ as well as the fact that 
$A \leq T_{x_1,y_1}$ implies $\tau(A) \geq \tau(T_{x_1,y_1})$ inequality (\ref{ineq:trafo}) now follows. 
The second assertion is a direct consequence of Lemma 3 in \cite{KGJT}, which says that for arbitrary $A,B \in \mathcal{A}$ with $A \leq B$ and strict inequality in at least one point $t \in (0,1)$ we have $\tau(B) < \tau(A)$ (for the sake of completeness we included the lemma in the Appendix). 
\end{proof}

\begin{lem}\label{lem:inequ2}
Suppose that $x_1 \in (0,1)$ and that $y_1 \in [A_M(x_1),1]$. 
Furthermore let $x \in (0,x_1)$ and $y \in I_x^{T_{x_1,y_1}}$ be arbitrary. Then 
\begin{align}\label{ineq:triangle}
\Delta(\varphi_{x,y}^{x_1}(T_{x_1,y_1}),T_{x_1,y_1}) \geq 0
\end{align}
holds. Moreoever, we have equality in (\ref{ineq:triangle}) if and only if $\varphi_{x,y}^{x_1}(T_{x_1,y_1})$ is a triangular Pickands dependence function too, i.e. if $\varphi_{x,y}^{x_1}(T_{x_1,y_1})=T_{x,y}$.
\end{lem}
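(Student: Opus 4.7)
My plan is to exploit the fact that triangular Pickands functions already saturate the target inequality. A direct computation from $\tau(T_{x_1,y_1})=\frac{1-y_1}{y_1}$ and $\rho(T_{x_1,y_1})=\frac{3(1-y_1)}{1+y_1}$ gives $\frac{3\tau(T_{x_1,y_1})}{2+\tau(T_{x_1,y_1})}=\rho(T_{x_1,y_1})$, so substituting into the definition of $\Delta$ reduces (\ref{ineq:triangle}) to the single inequality $\rho(\varphi)\ge\frac{3\tau(\varphi)}{2+\tau(\varphi)}$ for $\varphi:=\varphi_{x,y}^{x_1}(T_{x_1,y_1})$. In other words, Lemma~\ref{lem:inequ2} is just the main inequality~(\ref{ineq:main}) restricted to the two-knot family produced by $\varphi_{x,y}^{x_1}$.

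Starting from formula~(\ref{eq:formula.Delta}) with $A=T_{x_1,y_1}$ (so $2+\tau(A)=\frac{1+y_1}{y_1}$) together with the expressions~(\ref{eq:delta_tau}) and (\ref{eq:delta_rho}) for $\delta_\tau$, $\delta_\rho$, I would clear denominators to obtain a representation
\[
\Delta\;=\;\frac{6\,N\bigl[Q-P\,y_1(1+y)\bigr]}{(1+y_1)(1+y)\,Q},\qquad Q:=(1+y_1)(x_1-x)y+NP,
\]
with the two linear forms $N:=x_1(1-y)-x(1-y_1)$ and $P:=y_1(1-x)-y(1-x_1)$ already visible from the proof of Lemma~\ref{lem:inequ}. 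This is routine rearrangement; the interesting question is how the bracket $Q-P\,y_1(1+y)$ behaves.

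The crucial step, and what I expect to be the main obstacle, is finding a factorisation of that bracket which makes its sign manifest and simultaneously reveals the equality case $\varphi=T_{x,y}$. Introducing
\[
M:=(x_1-x)(1-y_1)-(1-x_1)(y_1-y),
\]
whose vanishing is equivalent to $\varphi=T_{x,y}$, I would look for linear identities tying $M$ to $N$ and $P$. Two such identities, namely $P+M=x_1-x$ and $(1+y_1)y+N-y_1(1+y)=M$, can be checked by direct expansion; substituting the first into $Q-P\,y_1(1+y)$ and then applying the second collapses the bracket to
\[
Q-P\,y_1(1+y)\;=\;M\cdot S,\qquad S:=y_1(1-x)+y(x_1+y_1).
\]
Guessing these two identities is the one delicate algebraic point; once they are in hand, the clean factorisation $\Delta=\frac{6NMS}{(1+y_1)(1+y)Q}$ is immediate.

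The proof then finishes by sign inspection. The definition of $I_x^{T_{x_1,y_1}}$ forces $y\le 1-\frac{(1-y_1)x}{x_1}$, i.e.\ $N\ge 0$, and $y\ge y_1-\frac{(x_1-x)(1-y_1)}{1-x_1}$ (or the larger lower bound $y\ge 1-x$), i.e.\ $M\ge 0$. Clearly $S>0$, and $Q>0$ since $(1+y_1)(x_1-x)y>0$ and $NP\ge 0$, using that $P\ge 0$ follows from monotonicity of $P$ in $y$ together with the one-line estimate $P\big|_{y=1-(1-y_1)x/x_1}=\frac{(x_1+y_1-1)(x_1-x)}{x_1}\ge 0$, where $x_1+y_1\ge 1$ comes from $y_1\ge A_M(x_1)\ge 1-x_1$. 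Thus $\Delta\ge 0$, which is (\ref{ineq:triangle}). Equality forces $N=0$ or $M=0$: the former is the trivial boundary case $y=1-\frac{(1-y_1)x}{x_1}$, where $\varphi$ just equals its input $T_{x_1,y_1}$, while the latter is exactly $\varphi=T_{x,y}$, giving the characterisation stated in the lemma.
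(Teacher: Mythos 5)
Your proof is correct and follows essentially the same route as the paper: both reduce $\Delta(\varphi_{x,y}^{x_1}(T_{x_1,y_1}),T_{x_1,y_1})$ to an explicit rational function and factor its numerator into three terms whose signs are read off from the endpoints of $I_x^{T_{x_1,y_1}}$ (your $N$, $M$, $S$ are exactly $-N_1$, $-N_2$, $N_3$ in the paper, and your $Q$ is the paper's $D_3+D_4-D_5+D_6$). Your derivation of the factorisation via the two linear identities and the positivity of $Q$ via $NP\ge 0$ is a slightly cleaner organisation of the same computation, and your explicit treatment of the boundary case $N=0$ (where $\varphi=T_{x_1,y_1}$) is if anything more careful than the paper's.
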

\begin{proof}
Using $\tau(T_{x_1,y_1}) = \tfrac{1-y_1}{y_1}$ and $\rho(T_{x_1,y_1}) = \tfrac{3(1-y_1)}{1+y_1}$ equation (\ref{eq:formula.Delta}) simplifies to 
\begin{align*}
\Delta(\varphi_{x,y}^{x_1}(T_{x_1,y_1}),T_{x_1,y_1})=  \frac{6N_1 N_2 N_3} {D_1 D_2 \left(D_{3} + D_4 - D_5 + D_6 \right)}
\end{align*}
where
\begin{align*}
	N_1 &= x(1-y_1) - x_1(1-y), \,\, N_2 = x(1-y_1) - x_1(1-y)  + y_1 - y\\
	N_3 &= y(x_1+y_1) + y_1(1-x)\\
	D_1 &= 1+y, \,\,D_2= 1+y_1, \,\, D_3= x_1y(x_1-x+y(1-x_1))\\
	D_4 &= y_1(1-x)(x_1-x), \,\, D_5= 2xyy_1(1-x_1),\,\, D_6= (1-x)xy_1^2.
\end{align*}
To prove $\Delta(\varphi_{x,y}^{x_1}(T_{x_1,y_1}),T_{x_1,y_1}) \geq 0$ it suffices 
to show that both, the nominator and the denominator are non-negative for every $y \in I_x^{T_{x_1,y_1}}$, which can be done as follows: \\
(i) Obviously $N_3 >0$ so the sign of the nominator is determined by $N_1 N_2$. Considering, firstly, that $N_1 N_2$ is quadratic in $y$, secondly, that $N_1N_2$ is zero exactly for $y \in \{y_1 - (x_1-x)\frac{1-y_1}{1-x_1}, 1-\frac{1-y_1}{x_1}x\}$ (notice that the first point is not necessarily contained in $ I_x^{T_{x_1,y_1}}$), and, thirdly, that the parabola opens downwards since the coefficient of the quadratic term is given by $-x_1(1-x_1) <0$ the nominator is non-negative.\\
(ii) Since $D_1,D_2$ are obviously positive the denominator is positive if and only if 
$D_{3} + D_4 - D_5 + D_6>0$. Considering $y \in I_x^{T_{x_1,y_1}}$ there exists a unique point $b \in [0, 1-\frac{1-y_1}{x_1}x - (y_1 - (x_1-x)\frac{1-y_1}{1-x_1})]$ such that
$y=y_1 - (x_1-x)\frac{1-y_1}{1-x_1} + b$ holds. A straightforward calculation yields
\begin{align*}
D_{3} + D_4 - D_5 + D_6 &= b^2x_1(1-x_1) + b(x_1-x)(2y_1-x_1) \\
 & \quad + (x_1-x)y_1 \frac{x(1-y_1)-2x_1+y_1+1}{1-x_1}.
\end{align*}  
The first summand is obviously non-negative, the same is true for the second one because of $y_1 \geq x_1$, and the third summand is positive since 
$x(1-y_1)-2x_1+y_1+1 \geq x(1-y_1) - x_1 + 1>0$. 
Considering that the second assertion is a direct consequence of the properties of the nominator mentioned in (i) the proof is complete.
\end{proof}

Building upon the previous lemmata we can now proof the main result of this paper. It confirms the lower part of Conjecture (C2) in \cite{SPT}. 
\begin{thm}\label{thm:main}
Every Pickands dependence function $A$ fulfills $\rho(A) \geq \frac{3 \tau(A)}{2 + \tau(A)}$. The inequality is best possible in the sense that for every $u \in [0,1]$ we can find a Pickands function $A$ such that $\tau(A)=u$ and $\rho(A)=\frac{3u}{2+u}$.
\end{thm}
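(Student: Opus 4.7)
The plan is to prove the inequality $\rho(A) \geq \frac{3\tau(A)}{2+\tau(A)}$ first for piecewise linear Pickands dependence functions by induction on the number of knots, then extend to all $A \in \mathcal{A}$ via a density and continuity argument, and finally exhibit triangular Pickands functions which achieve equality at every prescribed $\tau$-value.

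Setting $f(A) := \rho(A) - \frac{3\tau(A)}{2+\tau(A)}$ so that $\Delta(A,B) = f(A) - f(B)$, the identities $\tau(T_{x_1,y_1}) = \tfrac{1-y_1}{y_1}$ and $\rho(T_{x_1,y_1}) = \tfrac{3(1-y_1)}{1+y_1}$ recalled in the proof of Lemma~\ref{lem:inequ2} give $f(T_{x_1,y_1}) = 0$ for every triangular Pickands function, which settles the base case ($n=1$) of an induction on the number $n \geq 1$ of knots of a piecewise linear $A$. For the inductive step, given $A \in \mathcal{A}^{\mathbf{x}}$ with $\mathbf{x}=(x_1,\dots,x_n)$ and $n \geq 2$, I would let $\tilde A$ be the piecewise linear Pickands function that agrees with $A$ on $[x_2,1]$ and is linear on $[0,x_2]$ (so $\tilde A$ has knots $x_2,\dots,x_n$ and $\tilde A(x_2)=A(x_2)$). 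Monotonicity of chord slopes of the convex function $A$ shows both that $\tilde A \in \mathcal{A}$ (the slope condition at $x_2$ and $\tilde A \geq A \geq A_M$ are immediate) and that $A(x_1) \in I_{x_1}^{\tilde A}$, whence $A = \varphi_{x_1, A(x_1)}^{(x_2,\dots,x_n)}(\tilde A)$. Lemma~\ref{lem:inequ} applied to $\tilde A$, combined with Lemma~\ref{lem:inequ2}, then yields
\[
f(A) - f(\tilde A) \;=\; \Delta(A, \tilde A) \;\geq\; \Delta\bigl(\varphi_{x_1, A(x_1)}^{x_2}(T_{x_2, A(x_2)}),\, T_{x_2, A(x_2)}\bigr) \;\geq\; 0,
\]
and the induction hypothesis applied to $\tilde A$, which has $n-1$ knots, gives $f(A) \geq f(\tilde A) \geq 0$.

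To pass to arbitrary $A \in \mathcal{A}$, I would approximate $A$ by its piecewise linear interpolants $A_n$ on the uniform grid $\{k/n : 0 \leq k \leq n\}$. Convexity of $A$ forces $A_n \geq A \geq A_M$ together with the slope conditions needed for $A_n \in \mathcal{A}$, and $A_n \to A$ uniformly by uniform continuity on $[0,1]$. Continuity of $\rho$ in the uniform norm is immediate from bounded convergence, while continuity of $\tau$ combines uniform convergence of the continuous, bounded integrand $t(1-t)/A_n(t)$ (recall $A \geq 1/2$ on $[0,1]$) with weak convergence of the Stieltjes measures $d(D^+A_n) \to d(D^+A)$, a standard consequence of uniform convergence of convex functions on a compact interval. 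Hence $f$ is continuous on $(\mathcal{A}, \Vert \cdot \Vert_\infty)$ and $f(A) = \lim_n f(A_n) \geq 0$. Sharpness for each $u \in [0,1]$ is witnessed by $A := T_{1/2,\, 1/(1+u)}$, which lies in $\mathcal{A}$ since $1/(1+u) \in [1/2,1] = [A_M(1/2),1]$, and for which the triangular formulas yield $\tau(A)=u$ and $\rho(A)=3u/(2+u)$.

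The step I expect to be most delicate is the continuity of $\tau$ in the uniform metric: since $\tau$ integrates against the Radon measure $d(D^+A)$ rather than against $A$ itself, one must justify that uniform convergence of convex $A_n$ forces weak convergence of their second-derivative measures. I would handle this either via integration by parts (rewriting $\tau$ so integration is against $A$ or $D^+A$ with a smoother kernel) or by appealing to standard results on the weak convergence of derivatives of pointwise-convergent convex functions.
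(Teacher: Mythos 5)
Your proposal is correct and follows essentially the same route as the paper: induction on the number of knots using Lemma~\ref{lem:inequ} and Lemma~\ref{lem:inequ2} to reduce to the triangular case, then a denseness/continuity argument, with sharpness witnessed by triangular Pickands functions. The only minor difference is in the continuity step, where the paper avoids the delicate weak convergence of the measures $d(D^+A_n)$ by instead using continuity of $A \mapsto C_A$ in $(\Vert\cdot\Vert_\infty, d_\infty)$ together with the standard $d_\infty$-continuity of $\tau$ and $\rho$ on $\mathcal{C}$.
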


\begin{proof}
We first prove that the inequality holds for every piecewise linear Pickands dependence function by induction on the number of vertices: In the case of $n=1$ and $x_1 \in (0,1)$ each element 
$A$ of $\mathcal{A}^{x_1}$ is of triangular form, so we have 
$\rho(A) - \frac{3 \tau(A)}{2 + \tau(A)}=0$. In case of $n=2$ vertices and 
$x_0:=0 < x_1 < x_2 < 1$ each element $A$ of $\mathcal{A}^{(x_1,x_2)}_{y_1}$ 
fulfills $A=\varphi_{x_1,y_1}^{x_2}(T_{x_2,y_2})$. Applying inequality (\ref{ineq:triangle}) therefore yields 
\begin{align*}
\rho(A) - \frac{3 \tau(A)}{2 + \tau(A)} \geq \rho(T_{x_2,y_2}) - \frac{3 \tau(T_{x_2,y_2})}{2 + \tau(T_{x_2,y_2})}=0. 
\end{align*}
Assume now that $\rho(A) \geq \frac{3 \tau(A)}{2 + \tau(A)}$ holds for all piecewise linear Pickands functions with $n$ vertices. Suppose that   
$\mathbf{x}=(x_1,x_2,\ldots,x_n,x_{n+1})$ with $x_0:=0 < x_1 < x_2 < \cdots <x_n <x_{n+1} <1=:x_{n+2}$, fix $B \in \mathcal{A}^{\mathbf{x}}$ and set $y_1=B(x_1)$. Since
$B$ can be represented as $B=\varphi_{x_1,y_1}^{(x_2,\ldots,x_{n+1})}(A)$ for some piecewise linear $A$ with at most $n$ vertices, applying inequality (\ref{ineq:trafo}) and inequality (\ref{ineq:triangle}) yields   
\begin{align*}
\rho(B) - \frac{3 \tau(B)}{2 + \tau(B)} \geq \rho(A) - \frac{3 \tau(A)}{2 + \tau(A)} \geq 0, 
\end{align*}
which completes the proof by induction.\\
The general result for not necessarily piecewise linear Pickands functions now follows via a standard denseness and continuity argument taking into account the following two facts: (i) The mapping $\Phi:(\mathcal{A},\Vert \cdot \Vert_\infty) \longrightarrow (\mathcal{C}_{EV},d_\infty)$, 
defined by $\Phi(A)=C_A$ and the mapping $\Psi:\mathcal{C} \longrightarrow [-1,1]^2$, defined by $C \mapsto (\tau(C),\rho(C))$ are continuous. 
(ii) The family of all piecewise linear Pickands functions is dense in $(\mathcal{A},\Vert \cdot \Vert_\infty)$. The remaining assertion is a direct consequence of the identities
$\tau(T_{x_1,y_1}) = \tfrac{1-y_1}{y_1}$ and $\rho(T_{x_1,y_1}) = \tfrac{3(1-y_1)}{1+y_1}$.  
\end{proof}

\begin{remk}
Theorem \ref{thm:main} implies that the lower Hutchinson-Lai inequality  
$\rho(C_A) \geq -1 + \sqrt{1+3\tau(C_A)}$ is only sharp for the Pickands functions $A_M$ and $A_\Pi=1$, i.e. for the copulas $M$ and $\Pi$. Figure \ref{fig:omega} depicts both Hutchinson-Lai inequalities together with the boundary of the $\tau$-$\rho$-region of the full class $\mathcal{C}$ as derived in \cite{SPT} and the inequality derived in this paper.
\end{remk}

\begin{figure}[!h]
  \begin{center}
  \includegraphics[width = 10cm]{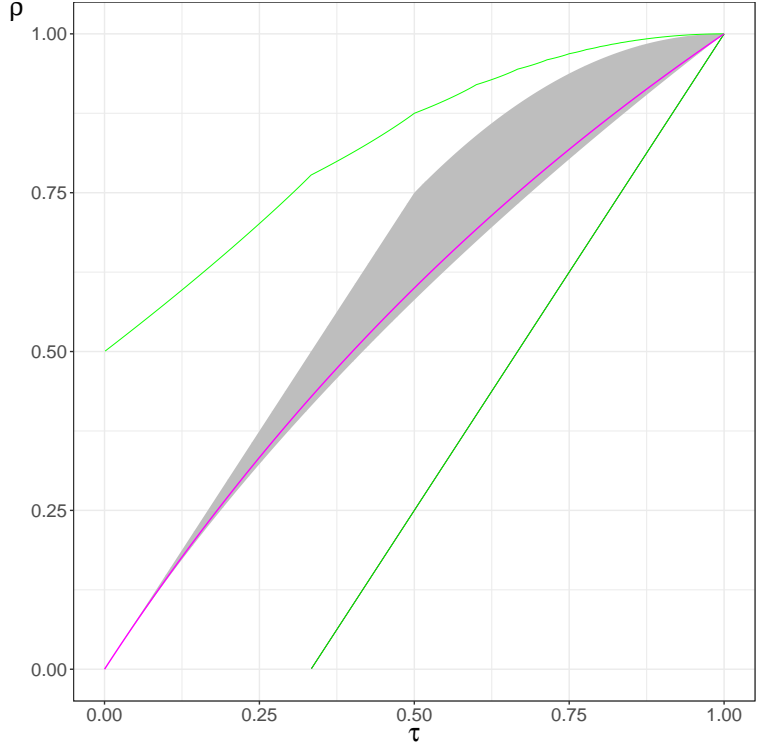}
        \caption{The region determined by the Hutchinson-Lai inequalities (gray), the boundary of the full $\tau$-$\rho$-region as recently established in \cite{SPT} (green), and the sharp inequality derived in this paper (magenta).}\label{fig:omega}
  \end{center}
     \end{figure}

\begin{remk}
Theorem \ref{thm:main} also implies that for every EVC $C_A$ Kendall's $\tau$ can not exceed Spearman's $\rho$ in general and that $\rho(C_A) \geq \frac{3 \tau(C_A)}{2+\tau(C_A)}>\tau(C_A)$ holds for all $A \in \kc_{EV} \setminus \{M,\Pi\}$. Furthermore a straightforward calculation shows that the function $f:[0,1] \longrightarrow [0,1]$, defined by $f(x)=\frac{3x}{2+x}-x$ attains its maximum at $x=\sqrt{6}-2$, which means that for $\tau(C_A)=\sqrt{6}-2$ we have
$\rho(C_A) \geq \tau(C_A) + 5- 2 \sqrt{6} \approx  \tau(C_A) +  0.101$.  
\end{remk}

\section{Conclusion and outlook}
We have shown that in the class of EVCs the lower Hutchinson-Lai inequality is only sharp for the comonotonic and the product copula, derived a new inequality, and proved that it is sharp in each point. 
We conjecture that in the class of EVCs the upper Hutchinson-Lai is not sharp either. Deriving the best-possible upper inequality will, however, be harder than deriving the sharp lower inequality has been.       

\section{Appendix}
For $A,B \in \kc$ we will write $A\succ B$ if and only if $A(t) \geq B(t)$ for every $t \in [0,1]$ with strict inequality in at least one point. 
\begin{lem}\label{preserveordertau}
For $A,B \in \mathcal{A}$ with $A \succ B $ we have $\tau(C_A) < \tau(C_B)$.
\end{lem}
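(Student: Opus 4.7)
Plan: The strategy is to transfer $A \succ B$ into a pointwise inequality of the corresponding copulas via (\ref{EVC_explicit}) and then invoke the standard concordance-order monotonicity of Kendall's $\tau$. Since $xy \in (0,1)$ for all $(x,y) \in (0,1)^2$, the pointwise inequality $A \geq B$ translates into $C_A(x,y) \leq C_B(x,y)$ on $(0,1)^2$; continuity of the Pickands functions together with the strict inequality at some interior point $t_0$ yields an open interval $I \subset (0,1)$ where $A > B$ and a non-empty open set $U = \{(u,v) \in (0,1)^2 : \log u/\log(uv) \in I\}$ on which $C_A < C_B$ strictly.

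For the weak inequality $\tau(C_A) \leq \tau(C_B)$, I would combine the representation $\tau(C) = 4\int_{[0,1]^2} C\, d\mu_C - 1$ with Hoeffding's symmetry identity $\int C_A\, d\mu_{C_B} = \int C_B\, d\mu_{C_A}$, which holds for any pair of continuous copulas. A short manipulation (adding and subtracting $\int C_A\, d\mu_{C_B}$) then delivers the identity
\begin{equation*}
\tau(C_B) - \tau(C_A) \,=\, 4\int_{[0,1]^2} (C_B - C_A)\, d(\mu_{C_A} + \mu_{C_B}),
\end{equation*}
whose right-hand side is non-negative because $C_B - C_A \geq 0$ everywhere.

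For the strict inequality, it suffices to show that $\mu_{C_A}$ charges the open set $U$ with positive mass. I would use the explicit form of the Lebesgue density of $C_A$, obtained by differentiating (\ref{EVC_explicit}) twice: off the kink set of $A$ one computes a density of the schematic form $c_A(u,v) = \frac{C_A(u,v)}{uv}\bigl[(A - tA')(A + (1-t)A') - t(1-t)A''/\log(uv)\bigr]$ with $t = \log u/\log(uv)$, the second bracketed contribution being non-negative since $\log(uv)<0$ and $A$ is convex. The crucial point is that on $I$ one has $A > B \geq A_M$, i.e., $A(t) > \max\{t, 1-t\}$ for $t \in I$; combined with the universal bounds $D^+A(t) \in [-1,1]$ this forces both factors $A(t) - t\,D^+A(t)$ and $A(t) + (1-t)\,D^+A(t)$ to be strictly positive on $I$. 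Hence $c_A > 0$ on $U$ off a Lebesgue-null set, so $\mu_{C_A}(U) > 0$, forcing strict inequality in the displayed identity and concluding $\tau(C_A) < \tau(C_B)$.

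The main technical obstacle is handling the density computation cleanly, since $A$ is only convex and thus $A''$ is a positive measure rather than a function, which in principle requires keeping track of the absolutely continuous and singular components of $\mu_{C_A}$ separately. The strict positivity argument, however, depends only on the almost-everywhere behavior of $A$ and $D^+A$ and on the pointwise bounds already built into the definition of $\mathcal{A}$, so the conclusion is robust.
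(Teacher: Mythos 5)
Your proof of the weak inequality is exactly the paper's: the same symmetry identity $\int_{[0,1]^2} C_A\,d\mu_{C_B}=\int_{[0,1]^2} C_B\,d\mu_{C_A}$ and the same rearrangement into $\tau(C_B)-\tau(C_A)=4\int_{[0,1]^2}(C_B-C_A)\,d(\mu_{C_A}+\mu_{C_B})\geq 0$. For strictness, both arguments reduce to showing that $\mu_{C_A}$ (or $\mu_{C_B}$) charges the open wedge $U=\{(u,v)\in(0,1)^2:\ln u/\ln(uv)\in I\}$ on which $C_B>C_A$, but you get there by a genuinely different route. The paper invokes the support characterization from \cite{TSFS}: $\textrm{Supp}(\mu_{C_D})$ is the region between the curves $x\mapsto x^{1/L_D-1}$ and $x\mapsto x^{1/R_D-1}$, and after arranging $I\subseteq(L_B,R_B)\subseteq(L_A,R_A)$ the wedge is a non-empty open subset of both supports, so both integrals are strictly positive; no density is ever computed. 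You instead differentiate $(uv)^{A(t)}$ and argue that the Lebesgue density of $C_A$ is strictly positive a.e.\ on $U$, because $A(t)>\max\{t,1-t\}$ on $I$ forces $A-tD^+A>0$ and $A+(1-t)D^+A>0$ while convexity makes the $A''$-contribution non-negative. That positivity argument is correct and has the virtue of showing precisely why strictness can only fail where $A$ touches $A_M$; but, as you yourself flag, for a merely convex $A$ you still owe the justification that the absolutely continuous part of $\mu_{C_A}$ really admits (at least) that density --- i.e.\ you need the full mass decomposition of extreme-value copula measures, not just their support. That decomposition is again the content of \cite{TSFS}, so either cite it there explicitly or be prepared to carry out a somewhat fussy measure-theoretic argument; the paper's support-based route sidesteps this entirely, which is why it is the softer and shorter option.
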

\begin{proof}
If $A \geq B$ then $C_{A}\leq C_{B}$ holds and we get  
\begin{align}\label{eq:tau-strict}
\tau(C_{B})-\tau(C_{A})&=-1+4\int_{[0,1]^2}C_{B}d\mu_{C_{B}}+1-4\int_{[0,1]^2}C_{A}d\mu_{C_{A}} \nonumber \\
&=4\left(\int_{[0,1]^2}(C_{B}-C_{A})d\mu_{C_{B}}+\int_{[0,1]^2}C_{A}d\mu_{C_{B}}-\int_{[0,1]^2}C_{A}d\mu_{C_{A}}\right) \nonumber \\
&=4\left(\int_{[0,1]^2}(C_{B}-C_{A})d\mu_{C_{B}}+\int_{[0,1]^2}C_{B}d\mu_{C_{A}}-\int_{[0,1]^2}C_{A}d\mu_{C_{A}}\right)  \nonumber \\
&=4\left(\int_{[0,1]^2}(C_{B}-C_{A})d\mu_{C_{B}}+\int_{[0,1]^2}(C_{B}-C_{A})d\mu_{C_{A}}\right)\geq 0.
\end{align}
According to \cite{TSFS}, setting 
$$
L_D := \max\{x \in [0,1]: D(x)=1-x\}, \quad R_D := \min\{x \in [0,1]: D(x)=x\}
$$
for every $D \in \mathcal{A}$ the support $\textrm{Supp}(\mu_{C_D})$ of $\mu_{C_D}$ coincides with the set $\{(x,y) \in [0,1]^2: f_{L_D}(x) \leq y \leq f_{R_D}(x)\}$, whereby
$f_t:[0,1] \rightarrow [0,1]$ is defined as $f_t(x)=x^{\frac{1}{t}-1}$ for $t \in (0,1)$, and as
$f_0=0,f_1=1$ for $t \in \{0,1\}$. \\
Suppose now that $A \succ B $ holds and that $B$ does not coincide with $t \mapsto \max\{1-t,t\}$ (in which case $\tau(C_A)<1=\tau(C_B)$ is trivial). Obviously $L_A \leq L_B < \frac{1}{2}$ as well as $R_A \geq R_B > \frac{1}{2}$ and we can find some $t_0 \in (L_B,R_B)$ fulfilling
$A(t_0)>B(t_0)$. By continuity there exists some $\delta>0$ such that $A(t)>B(t)$ holds for every
$t \in [\underline{t},\overline{t}]:=[t_0 - \delta, t_0 + \delta] \subseteq (L_B,R_B) \subseteq (L_A,R_A)$. Considering 
\begin{align*}
\big\{(x,y) \in (0,1)^2: f_{\underline{t}}(x) < y < f_{\overline{t}}(x) \big\} & \subset 
\big\{ (x,y) \in (0,1)^2: C_B(x,y) > C_A(x,y)\big\} \\
&=:\{C_B > C_A\}
\end{align*}
and
$$
\big\{(x,y) \in (0,1)^2: f_{\underline{t}}(x) < y < f_{\overline{t}}(x) \big\} \subset 
\textrm{Supp}(\mu_{C_B}) \subseteq \textrm{Supp}(\mu_{C_A})
$$
shows $\mu_{C_B}(\{C_B > C_A\})>0$ and $\mu_{C_A}(\{C_B > C_A\})>0$. Hence 
$$
\int_{[0,1]^2}(C_{B}-C_{A})d \mu_{C_{B}} >0 \, \textrm{ and }
\int_{[0,1]^2}(C_{B}-C_{A})d\mu_{C_{A}} > 0
$$
follows, and applying eq. (\ref{eq:tau-strict}) yields $\tau(C_A) < \tau(C_B)$.
\end{proof}


\end{document}